\newcommand{\F}{\mathcal{F}}
\newcommand{\reals}{\mathbb{R}}
\newcommand{\borel}{\mathcal{B}}
\newcommand{\laweq}{\overset{Law}{=}}
\newcommand{\E}{\mathbb{E}}
\newcommand{\p}{\mathbb{P}}
\newcommand{\Q}{\mathbb{Q}}
\newcommand{\mat}{}
\newcommand{\tr}{Tr}
\newcommand{\filteredspace}{\left( \Omega, \F, \left( \F_t \right)_{ t \geq 0 }, \p \right)}
\newcommand{\id}{\mathbf{id}}
\newtheorem{theorem}{Theorem}[section]
\newtheorem{lemma}{Lemma}[section]
\newtheorem{corollary}{Corollary}[section]
\theoremstyle{definition}
\newtheorem{mydef}{Definition}[section]
\numberwithin{equation}{section}
\theoremstyle{plain}
\title{Integrated Wishart bridge processes and generalised Hartman-Watson law}
\author{Jason Leung \footnote{School of Mathematics and Statistics, e-mail: jason.leung@unimelb.edu.au} \footnote{Supported by the Albert Shimmins fund} }
\affil{\it The University of Melbourne}
\date{June, 2019}
\begin{document}
\maketitle
\begin{abstract}
This article is concerned with the joint law of an integrated Wishart bridge process and the trace of an integrated inverse Wishart bridge process over the interval $ \left[0,t\right] $. Its Laplace transform is obtained by studying the Wishart bridge processes and the absolute continuity property of Wishart laws. 
\end{abstract}

\newpage

\pagenumbering{arabic}

\section{Introduction}

Suppose $ X $ is a solution to the stochastic differential equation on the cone $ \tilde{\mathcal{S}}_n^+ $ of $ n \times n $ symmetric positive semi-definite matrices
\begin{align} \label{Wishart SDE}
	d X_t = \sqrt{X}_t d W_t a + a^\top d W^\top_t \sqrt{X}_t + \left( b X_t + X_t b + \alpha a^\top a \right)dt, \quad t \geq 0,
\end{align}
where $ X_0 = x \in \tilde{\mathcal{S}}^+_n $, $ W $ is an $ n \times n $ matrix-valued Brownian motion, $ a $ in the space $ GL(n) $ of $ n \times n $ invertible matrices, $ b $ in the cone $ \tilde{\mathcal{S}}^- $ of negative semi-definite matrices such that $ ab = ba $ and $ \alpha \in \{1, 2, \dots, n-1\} \cup (n-1, \infty) $. 

The process $ X $ satisfying (\ref{Wishart SDE}), first introduced in \cite{bru1991wishart}, is called a {\it Wishart process} of {\it dimension} $ n $, {\it index} $ \alpha $ and parameters $ a,b $ with initial value $ x $ and is denoted $ WIS(n,\alpha,a,b,x) $. It was shown in \cite{Cuchiero2011} that the stochastic differential equation (\ref{Wishart SDE}) has a unique weak solution for $ \alpha \geq n-1 $ as well as for $ \alpha \in \{1,2,\dots, n-1\} $ with the additional condition of $ \text{rank}(x) \leq \alpha $. For $ \alpha \geq n + 1 $, \cite{MAYERHOFER20112072} showed that the solution to (\ref{Wishart SDE}) exists as a strong solution and is unique for $ t \geq 0 $. Moroever, it was shown in \cite{MAYERHOFER20112072} that if the initial value $ x $ belongs to the space $ \mathcal{S}_n^+ $ of $ n \times n $ positive definite matrices, the solution to (\ref{Wishart SDE}) also belongs to $ \mathcal{S}_n^+ $.

Given $ \alpha \geq n+1 $ and $ x \in \tilde{\mathcal{S}}_n^+ $, then for $ t \geq 0 $, the determinant of $ X $ satisfies the stochastic differential equation
\begin{align}\label{det Wishart SDE}
	\begin{split}
		\ln\left(\det (\mat X_t)\right) &= \int_{0}^{t} \left( \alpha - n - 1\right) \tr\left( \mat a^\top \mat a  \mat X^{-1}_s \right) + 2 \tr\left( \mat b \right) d s  \\ & \qquad + \int_{0}^{t} 2 \tr\left( \sqrt{\mat X^{-1}_s} d \mat W_s \mat a \right), \qquad 0 \leq t < \tau,
	\end{split}
\end{align}
where $ \tau = \inf \left\{ t \geq 0 : \det{(\mat X_t)} = 0 \right\} $. It was shown in \cite{MAYERHOFER20112072} Theorem 3.4 that for $ \alpha \geq n+1 $ and $ x \in \mathcal{S}_{n}^+ $, $ \tau = \infty $ almost surely. 

Given $ t \geq 0 $, the Laplace transform of $ X_t $ can be computed directly from solving the matrix Riccati ordinary differential equation (see \cite{Ahdida2013Exact} Proposition 4) and is given by
\begin{align*}
	 \E e^{ - \tr\left(u X_t\right)} = \frac{\exp\left\{ \tr \left[ u \left( I - 2 \sigma_t u \right)^{-1} e^{bt} x e^{bt}  \right] \right\}}{\det \left(I - 2 \sigma_t u  \right)^{\alpha / 2}}, \quad u \in \tilde{\mathcal{S}}^+_n,
\end{align*}
where $ \sigma_t = \int_0^t e^{b s} a^\top a e^{b^\top s} ds $ . Therefore, by comparing the above expression to the Laplace transform of the non-central Wishart random variable computed in \cite{Letac20081393}, we deduce that $ X_t $ follows the non-central Wishart distribution with $ \alpha $ degrees of freedom, covariance matrix $ \sigma_t $ and non-centrality matrix $ e^{bt} x e^{bt} \sigma_t^{-1} $, denoted $ \mathcal{W}_n(\alpha, \sigma_t, e^{bt} x e^{bt} \sigma_t^{-1} ) $.

We denote the space of $ n \times n $ matrix-valued continuous function defined on $ [0,t] $ by $ \mathcal{C}\left( [0, \infty), \reals^{n \times n} \right) $, the law of a Wishart process $ X $ on $ \mathcal{C}\left( [0, \infty), \reals^{n \times n} \right) $ and its respective semi-group by $  ^n Q^{\alpha, a, b}_{x} $, or simply $ Q^{\alpha, a, b}_{x} $ when there is no ambiguities about the dimension $ n $. Moreover, we assume $ \Omega = \mathcal{C}\left([0,\infty), \reals^{n \times n}\right) $, the set of $ \reals^{n \times n} $-valued continuous functions defined on $ [0,\infty) $, and denote $ X $ the coordinate process $ X_t(\omega) = \omega_t $.

For $ \alpha \geq n+1 $, the Wishart law $ Q^{\alpha, a, b}_{x} $ is absolutely continuous with respect to the parameters $ \alpha $ and $ b $, their respective Cameron-Martin-Girsanov formulae are given as follows:

\begin{lemma}[Absolute continuity of Wishart laws] \label{CMG formulae}
	Let $ \alpha \geq n+1 $, $ t \geq 0 $ and $ Q^{\alpha, a, b}_x $ be the law of $ WIS(n, \alpha, a, b, x) $ on $ \mathcal{C}([0,\infty), \reals^{n \times n}) $.
	\begin{enumerate}[(i)]
		\item For $ u \in\tilde{\mathcal{S}}_n^- $ such that $ u a = a u $,
		\begin{align}\label{CMG drift}
			\begin{split}
				d Q^{\alpha, a, b + u}_x = &\exp \left\{  \tr \left[ \frac{1}{2}( a^\top a )^{-1} u \left( X_t - X_0 \right) \right. \right. \\ & \left. \left. - \frac{1}{2}\alpha u t - \int_0^t \left(a^\top a \right)^{-1} \left(u^2 + b u\right) X_s ds \right] \right\} d Q^{\alpha, a, b}_x .
			\end{split}
		\end{align}
		\item For $ x \in \mathcal{S}^+_n $ and $ \nu \in [\left(n+1 - \alpha\right)/2, \infty ) $,
		\begin{align}\label{CMG index}
			\begin{split}
				d Q_x^{\alpha + 2 \nu, a, b} = & \left(\frac{\det X_t}{\det x}\right)^{\nu/2} \exp \bigg\{ - \tr \bigg[ \nu b t   \\ &+  \left( \alpha - n - 1 + \nu \right) \frac{\nu}{2} \int_0^t \left(a^\top a\right) X^{-1}_s d s  \bigg] \bigg\} d Q^{\alpha, a, b}_x.
			\end{split}
		\end{align}
	\end{enumerate}
\end{lemma}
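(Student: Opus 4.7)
The plan is to prove both parts by Girsanov's theorem. In each case I would exhibit an explicit exponential local martingale and then, via It\^o's formula applied to a well-chosen function of $X$, identify it with the density stated in the lemma.

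For part (i), I would look for a matrix-valued process $H_t$ such that the Girsanov substitution $dW_t = d\tilde{W}_t + H_t\,dt$ inserts the missing drift into (\ref{Wishart SDE}), namely
$$\sqrt{X_t}\,H_t\,a + a^\top H_t^\top \sqrt{X_t} = u X_t + X_t u.$$
The hypothesis $ua=au$ (which implies $a^\top u = u a^\top$, so that $u$ also commutes with $(a^\top a)^{-1}$) makes $H_t = \sqrt{X_t}\,u\,a^{-1}$ a valid choice, since then $\sqrt{X_t}H_t a = X_t u$ and $a^\top H_t^\top \sqrt{X_t} = uX_t$ by direct computation. The candidate density is the stochastic exponential $Z_t = \exp\{\int_0^t \tr(H_s^\top dW_s) - \tfrac12\int_0^t \tr(H_s^\top H_s)\,ds\}$, which is automatically a local martingale. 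To put its exponent in the form given in (\ref{CMG drift}), I would apply It\^o's formula to $\tr\bigl(\tfrac12(a^\top a)^{-1} u X_t\bigr)$ and use (\ref{Wishart SDE}) to eliminate the stochastic integral; the resulting finite-variation pieces $\tr(u)\,t$, $\tr((a^\top a)^{-1}u b X_s)\,ds$ and $\tr((a^\top a)^{-1}u^2 X_s)\,ds$ then combine into the exponent of (\ref{CMG drift}). Girsanov's theorem finally delivers a Brownian motion $\tilde{W}$ under $Z_t\,dQ^{\alpha,a,b}_x$ for which $X$ solves the Wishart SDE with drift parameter $b+u$, so weak uniqueness identifies the new law as $Q^{\alpha,a,b+u}_x$.

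For part (ii), I would use the explicit SDE (\ref{det Wishart SDE}) for $\ln\det X_t$. Introduce the local martingale $M_t = \nu\int_0^t \tr(\sqrt{X_s^{-1}}\,dW_s\,a)$; a direct computation gives $\langle M\rangle_t = \nu^2 \int_0^t \tr(a^\top a\,X_s^{-1})\,ds$. Substituting (\ref{det Wishart SDE}) into $(\det X_t/\det x)^{\nu/2}$ and cancelling the drift of $\ln\det X_t$ against the two finite-variation terms on the right-hand side of (\ref{CMG index}) leaves precisely the Dol\'eans-Dade exponential $\mathcal{E}(M)_t$, which is therefore a local martingale. Girsanov's Brownian correction here is $d\langle W, M\rangle_t = \nu\,\sqrt{X_t^{-1}}\,a^\top\,dt$; substituting this back into (\ref{Wishart SDE}), one checks that under the new measure the drift of $X$ becomes $bX + Xb + (\alpha+2\nu)a^\top a$, and weak uniqueness identifies the new law as $Q^{\alpha+2\nu,a,b}_x$.

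The main obstacle in both parts will be promoting the candidate density from a local to a true martingale, which is required before Girsanov's theorem can actually be invoked. Because the integrands involve the unbounded processes $\sqrt{X_t}$ and $\sqrt{X_t^{-1}}$, Novikov's condition cannot be applied directly. My intended route is the standard localisation argument: stop the density at $\tau_k = \inf\{t : \|X_t\| \vee \|X_t^{-1}\| \geq k\}$ (in part (i) only the first factor is relevant), and note that $\tau_k\to\infty$ almost surely under the stated hypotheses---this is where $\alpha\ge n+1$, $x\in\mathcal{S}^+_n$ and, in (ii), $\nu\ge (n+1-\alpha)/2$ enter, the latter also ensuring that the target index $\alpha+2\nu$ remains admissible. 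Uniform integrability of the stopped densities should then follow from the explicit non-central Wishart Laplace transform recorded in the introduction, together with the negative semi-definiteness of $u$ in (i), which keeps $\tr((a^\top a)^{-1}u^2 X_s)$ controlled by exponential moments of $X$.
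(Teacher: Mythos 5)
The paper never proves Lemma \ref{CMG formulae}; it is quoted as a known absolute-continuity result (the $a\neq\id$, $b\neq 0$ extension of the formulae of Donati-Martin, Doumerc, Matsumoto and Yor), so there is no in-paper argument to compare yours against. Your Girsanov route is the standard proof and is structurally sound: $H_t=\sqrt{X_t}\,u\,a^{-1}$ does produce the drift increment $uX_t+X_tu$, and in part (ii) the identification of the stated density with the Dol\'eans-Dade exponential $\mathcal{E}(M)_t$ via (\ref{det Wishart SDE}) is exactly right, including $\langle M\rangle_t=\nu^2\int_0^t\tr(a^\top a X_s^{-1})\,ds$ and the drift correction $2\nu a^\top a$.

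Two concrete problems remain. First, in part (i) your own bookkeeping does not land on (\ref{CMG drift}) as printed. With $N_t=\int_0^t\tr(H_s^\top dW_s)$ you get $\langle N\rangle_t=\int_0^t\tr((a^\top a)^{-1}u^2X_s)\,ds$, so the compensator contributes $-\tfrac12\int_0^t\tr((a^\top a)^{-1}u^2X_s)\,ds$, while the It\^o computation for $\tr(\tfrac12(a^\top a)^{-1}uX_t)$ contributes $-\int_0^t\tr((a^\top a)^{-1}buX_s)\,ds$; the exponent therefore contains $\tfrac12 u^2+bu$, not $u^2+bu$. The scalar case $n=1$, $a=1$, $b=0$ (squared Bessel) confirms that $\tfrac12$ is the correct coefficient of $u^2$, so you should flag the mismatch with the statement rather than assert that the pieces \emph{combine into the exponent of (\ref{CMG drift})}. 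Second, your uniform-integrability discussion misallocates the difficulty. In part (i), at least when $u$ also commutes with $b$ (as in every application in the paper), each term in the exponent of $Z_t$ is bounded above by a constant on $[0,t]$ because $(a^\top a)^{-1}u$ is negative semi-definite and $(a^\top a)^{-1}(bu+\tfrac12u^2)$ is positive semi-definite; a bounded nonnegative local martingale is a true martingale, and no appeal to the Laplace transform is needed. In part (ii), by contrast, $\nu$ may be negative (only $\nu\geq(n+1-\alpha)/2$ is assumed), in which case $(\det X_t)^{\nu/2}$ blows up near the boundary of the cone and the coefficient $(\alpha-n-1+\nu)\nu/2$ of the time integral is negative, so the density is not dominated by anything the non-central Wishart Laplace transform controls. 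The standard repair is to prove the identity first for $\nu\geq 0$, where monotone convergence along your stopping times together with the fact that $Q^{\alpha+2\nu,a,b}_x$ does not charge $\{\tau\leq t\}$ gives $\E\,\mathcal{E}(M)_t=1$, and then to obtain negative $\nu$ by inverting the resulting absolute-continuity relation between $Q^{\alpha,a,b}_x$ and $Q^{\alpha+2\nu,a,b}_x$. As written, your sketch stalls at this point.
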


\subsection*{Main result}

This article is concerned with the joint conditional Laplace transform of the pair for 
\begin{align} \label{pair}
	\left( \int_0^t X_s ds, \int_0^t \tr \left(a^{-1}a X^{-1}_s \right) ds \right),
\end{align}
for a $ WIS(n,\alpha,a,b,x) $ process $ X $ and $ \alpha \geq n+1 $, given $ X_t $ for a fixed $ t \geq 0 $. 

Let us first state the main result of this article,
\begin{theorem} \label{main result}
Let $ X $ be a $ WIS(n,\alpha,a,b,x) $ process and $ \alpha \geq n+1 $, then
\begin{align} \label{joint conditional Laplace transform}
	\begin{split}
		&\E  \left( \left. \exp \left\{ - \tr \left[ u^2 \int_0^t X_s ds \right] - \frac{\lambda^2}{2} \tr \left[ \int_0^t \left(a^\top a\right) X^{-1}_s d s \right] \right\} \right| X_t = y \right) \\
		= & \frac{q^{\alpha + 2 \nu_\lambda, a, b + \delta_u}_t(x,y)}{q^{\alpha, a, b}_t(x,y)} \left(\frac{\det y}{\det x}\right)^{-\nu_\lambda/2} \\ & \quad  \exp \left\{ \tr \left[ \nu_\lambda b t +  \left(\frac{1}{2}\left(a^\top a\right)^{-3/2}\left(u^2 + bu\right)^{1/2} \left(y - x\right) - \alpha t \right) \right] \right\}, 
	\end{split}
\end{align}
	where
	\begin{align*}
	    u & \in \mathcal{D}, \quad \lambda \in \reals,\\
	    \delta_u &= \frac{1}{2}\left(-b + \sqrt{ b^2 - 4 a^\top a u^2}\right),\\
		\mathcal{D} &= \left\{ u \in \mathcal{S}_n : \delta_u + b \in \tilde{\mathcal{S}}_n^+, au = ua \right\}, \\
		\nu_\lambda &= \sqrt{\lambda^2 + (\alpha - n - 1)^2} - \alpha + n + 1,
	\end{align*}
	and $ q^{\alpha, a, b}_t(x,y) $ denotes the density of a $ WIS(n, \alpha, a, b, x) $ semi-group.
\end{theorem}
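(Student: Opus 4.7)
The strategy is to apply the two Cameron--Martin--Girsanov identities of Lemma~\ref{CMG formulae} in succession, with carefully chosen parameters, so that each of the two time-integrals inside the Laplace exponent is matched and cancelled by the $X_s$- or $X_s^{-1}$-integral appearing in the corresponding Girsanov density. What will remain is a multiplicative factor that depends only on the boundary data $(X_0, X_t, t)$; conditioning on $\{X_t = y\}$ will then freeze that factor, and a standard Bayes-type identity will deliver the ratio of Wishart transition densities appearing in \eqref{joint conditional Laplace transform}.

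For the first integral, I plan to apply \eqref{CMG drift} with the shift $\delta_u$ defined in the statement. Using the commutation $au = ua$ inherited from $u \in \mathcal{D}$, which also forces $b$ and $\delta_u$ to commute, a direct computation gives
\begin{align*}
\delta_u(\delta_u + b) \;=\; \tfrac{1}{4}\bigl(-b + \sqrt{b^2 - 4\, a^\top a\, u^2}\bigr)\bigl(b + \sqrt{b^2 - 4\, a^\top a\, u^2}\bigr) \;=\; -\, a^\top a\, u^2,
\end{align*}
so $(a^\top a)^{-1}(\delta_u^2 + b\delta_u) = -u^2$ and the $X_s$-integral in \eqref{CMG drift} captures exactly (up to sign) the $u^2 \int_0^t X_s\,ds$ term. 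Next, on the tilted law $Q^{\alpha, a, b+\delta_u}_x$ I will apply \eqref{CMG index} with the index shift $\nu = \nu_\lambda$, chosen so that the coefficient $(\alpha - n - 1 + \nu_\lambda)\nu_\lambda/2$ matches the target $\lambda^2/2$ of the $X_s^{-1}$-integral; the explicit formula in the statement corresponds to the nonnegative root. Composing the two reciprocal densities, the joint integrand of the Laplace transform can be written as
\begin{align*}
\exp\Bigl\{-\tr\bigl[u^2\!\int_0^t\! X_s\,ds\bigr] - \tfrac{\lambda^2}{2}\tr\bigl[\!\int_0^t\! a^\top a\, X_s^{-1}\,ds\bigr]\Bigr\}\, dQ^{\alpha, a, b}_x \;=\; H(X_0, X_t, t)\, dQ^{\alpha + 2\nu_\lambda,\, a,\, b + \delta_u}_x,
\end{align*}
where $H$ collects the boundary trace from \eqref{CMG drift}, the determinant factor $(\det X_t / \det x)^{-\nu_\lambda/2}$ from \eqref{CMG index} and the deterministic $t$-terms.

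To finish, I will invoke the elementary identity that for any $\mathcal{F}_t$-measurable $\Phi$ and two laws related by $dQ'' = Z\, dQ$,
\begin{align*}
\E^{Q}\bigl[\Phi \,\big|\, X_t = y\bigr] \;=\; \frac{q^{Q''}_t(x,y)}{q^{Q}_t(x,y)}\, \E^{Q''}\bigl[\Phi\, Z^{-1} \,\big|\, X_t = y\bigr],
\end{align*}
and specialise it to $Q = Q^{\alpha, a, b}_x$, $Q'' = Q^{\alpha + 2\nu_\lambda, a, b+\delta_u}_x$ with $\Phi$ the Laplace integrand. By the construction above $\Phi\, Z^{-1}$ depends only on $(X_0, X_t, t)$, so the inner conditional expectation collapses to the deterministic value $H(x, y, t)$, and collecting terms yields \eqref{joint conditional Laplace transform}. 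The principal obstacle will be verifying the admissibility prerequisites of Lemma~\ref{CMG formulae}: that $\delta_u$ commutes with $a$ and lies in the matrix cone required by \eqref{CMG drift}, that $b + \delta_u$ is a legitimate Wishart drift, that $\nu_\lambda \geq (n + 1 - \alpha)/2$ so that \eqref{CMG index} applies, and that the symmetric matrix square root in the definition of $\delta_u$ is unambiguously well-defined on the correct branch throughout the parameter set $\mathcal{D}$.
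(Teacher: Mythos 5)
Your plan follows the paper's own route: the paper likewise composes the two Cameron--Martin--Girsanov formulae (\ref{CMG drift}) and (\ref{CMG index}) so that the two time-integrals are absorbed into the tilted law $Q^{\alpha+2\nu_\lambda,a,b+\delta_u}_x$, leaving only an endpoint factor, and it extracts the density ratio $q^{\alpha+2\nu_\lambda,a,b+\delta_u}_t/q^{\alpha,a,b}_t$ by disintegrating both measures over $X_t$ via the bridge identity (\ref{Wishart bridge property}), which is precisely your Bayes-type identity. The one point to settle is the branch of $\delta_u$: as your own computation shows, the stated $\delta_u$ gives $(a^\top a)^{-1}(\delta_u^2+b\delta_u)=-u^2$ whereas the cancellation requires solving $u^2=(a^\top a)^{-1}(\delta_u^2+b\delta_u)$ (as the paper prescribes before Corollary \ref{Corollary to integrated Wishart bridge main result}), so your ``up to sign'' hedge must be resolved in favour of the latter equation rather than the displayed square root.
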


Formula (\ref{joint conditional Laplace transform}) is an extension of that given in Proposition 2.4 of \cite{donati2004some}, where $ a $ is assumed to be the identity matrix $ \mathbf{id} $ and $ b $ is $ 0 $. The proof for Theorem \ref{main result} relies on, as in that of \cite{donati2004some}, the absolute continuity of Wishart law with respect to the dimension parameter $ \alpha $ and the drift parameter $ b $ as well as the law of a Wishart bridge process over $ \left[0,t\right] $, which will be defined in the next section.

\section{Wishart bridge processes}

A bridge of a Wishart process can be thought of as a Wishart process with its two end points ``pinned down'' over a fixed time interval. We define the law of a Wishart bridge process as a regular conditional probability measure, analogous to that of a squared Bessel bridge process as defined in \cite{revuz1999continuous} Chapter XI.

We denote the space of $ n \times n $ matrix-valued continuous function defined on $ A \subseteq [0,\infty) $ by $ \mathcal{C}\left( A, \reals^{n \times n} \right) $, the law of $ X $ on $ \mathcal{C}\left( [0, \infty), \reals^{n \times n} \right) $ and its respective semi-group by $  ^n Q^{\alpha, a, b}_{x} $, or simply $ Q^{\alpha, a, b}_{x} $ when there is no ambiguities about the dimension $ n $. Throughout this article, we assume $ \Omega = \mathcal{C}\left([0,\infty), \reals^{n \times n}\right) $ and denote $ X $ the coordinate process $ X_t(\omega) = \omega_t $.

For every $ t \geq 0 $, let us consider the space $ \mathbb{W}_t = \mathcal{C}([0,t], \reals^{n \times n}) $ endowed with the topology generated by the uniform metric $ \rho $ and the Borel $ \sigma $-algebra $ \borel(\mathbb{W}_t) $ generated by this topology. Therefore the metric space $ \left(\mathbb{W}_t, \rho\right) $ is complete and separable (see \cite{billingsley1968convergence}). Consequently, there exists a unique regular conditional distribution of $ ^n Q^{\alpha, a ,b}_x ( {} \cdot | X_t) $, namely a family of probability measures $ ^n Q^{\alpha, a ,b}_{x,y,t} $ on $ \mathbb{W}_t $ such that for every $ B \in \mathcal{B} $,
\begin{align*}
	^n Q^{\alpha, a ,b}_x (B) = \int {^n Q^{\alpha, a ,b}_{x,y,t}}(B) \mu_t(dy),
\end{align*}
where $ \mu_t $ is the density of $ X_t $ under $ ^n Q^{\alpha, a ,b}_x $.

Therefore we can define a Wishart bridge process by specifying its law as follow:

\begin{mydef}
	A continuous process of which law is $ ^n Q^{\alpha, a ,b}_{x,y,t} $ is called an $ n $-dimensional {\it Wishart Bridge process} (with parameters $ \alpha, a, b $) from $ x $ to $ y $ over $ [0,t] $ and is denoted by $ WIS^{n, \alpha, a, b}_t(x,y) $.
\end{mydef}

As for the law of a Wishart process, we simply write $ Q^{\alpha, a ,b}_{x,y,t} $ for the law of a Wishart bridge process when there is no ambiguities about the dimension. Loosely speaking, the law of a Wishart bridge can be understood in a sense that for every $ B \in \borel(\mathbb{W}_t) $,
\begin{align*}
	Q^{\alpha, a ,b}_{x,y,t}(B) = Q^{\alpha, a ,b}_{x}(B | X_t = y),
\end{align*}
where $ X $ is the coordinate process.

From the definition of a regular conditional probability (see, for example \cite{ikeda1989stochastic}), we observe that for every $ B \in \borel(\mathbb{W}_t) $, the map $ y \mapsto Q^{\alpha, a ,b}_{x,y,t}(B) $ is measurable and for every measurable function $ f $ on $ \mathbb{W}_t \times \reals^{n \times n} $,
\begin{align}\label{Wishart bridge property}
	\int f(\omega, \omega_t) Q^{\alpha,a,b}_x\left(d\omega\right) = \int \int f(\omega,y) Q^{\alpha,a,b}_{x,y,t}\left(d \omega\right) \mu_t\left(dy\right).
\end{align}

Throughout this article, we follow the notation in \cite{revuz1999continuous} Chapter III, denoting a semi-group $ P_t $ acting on an element $ f  $ in  $ \mathcal{C}_0\left(\reals^{n \times n}, \reals \right) $  by $ P_{t} f $, that is 
\begin{align*}
	P_{t} f = \int f (y) P_{t}(x,dy),
\end{align*}
where $ \mathcal{C}_0\left(\reals^{n \times n}, \reals \right) $ denotes the set of real-valued continuous functions on $ \reals^{n \times n} $ vanishing at infinity. And the function $ p_{t}(x,y) $ such that 
\begin{align*}
	\int f(y) P_{t}(x,dy) = \int f(y) p_{t}(x,y)dy,
\end{align*}
for every Borel measurable function $ f $ is called the {\it density} of the semi-group $ P_t $.

We also make use of the square bracket $ P_{t} \left[f\right] $ instead of $ P_{t}\left(f\right) $ to avoid confusion with probability measures.

\subsection{Integrated Wishart bridge processes}

Suppose $ X $ is a Wishart process with law $ Q^{\alpha, a, b}_{x} $, we call the process $ Y $ defined by
\begin{align*}
	Y_t = \int_0^t X_s ds, \quad t \geq 0,
\end{align*} 
an {\it integrated Wishart process}. An explicit formula for the conditional Laplace transform of $ Y_t $ given $ X_t $ at a fixed $ t \geq 0 $ was derived in \cite{donati2004some} for $ \alpha \geq n+1 $, $ a = \mathbf{id} $ and $ b = 0 $ using the absolute continuity property of Wishart laws. Similarly, the aforementioned formula can be extended to a more general class of Wishart processes by using the absolute continuity property of Wishart laws.

\begin{theorem} \label{integrated Wishart bridge main result}
	Let $ \alpha \geq n+1 $, $ a \in GL(n) $ and $ b \in \tilde{\mathcal{S}}_n^- $ be commutative. Then for $ t \geq 0 $,
	\begin{align}
		\nonumber &Q_{x, y}^b  \left[ \exp \left\{ - \tr \left[ \left(u^2 + b u \right) \int_0^t \left(a^\top a \right)^{-1} X_s ds \right] \right\} \right] \\ &= \frac{q_t^{b+u}(x,y)}{q_t^b(x,y)} \exp \left\{ \tr \left[ -\frac{1}{2} u \left(\left(a^\top a\right)^{-1}\left(y - x\right) - \alpha t \right) \right] \right\}, \quad u \in \mathcal{D},
	\end{align}
	where
	\begin{align*}
	\mathcal{D} = \left\{ u \in \mathcal{S}_n : u+b \in \tilde{\mathcal{S}}_n^-, au = ua \right\},
	\end{align*}
	and $ X $ is the coordinate process, $ Q^b_{x,y} $ and $ q_t^b $ denotes the $ WIS^{n,\alpha,a,b}_t(x,y) $ law and the density of a $ WIS(n, \alpha, a, b, x) $ semigroup respectively.
\end{theorem}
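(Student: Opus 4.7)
The plan is to derive the identity by applying the Cameron--Martin--Girsanov formula (\ref{CMG drift}) from Lemma \ref{CMG formulae}(i) with the drift shift $b \mapsto b+u$, and then invoking the disintegration (\ref{Wishart bridge property}) that defines the Wishart bridge law.

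First, I would rearrange (\ref{CMG drift}) so that the exponential of the occupation-time integral $\int_0^t (a^\top a)^{-1}(u^2+bu)X_s\,ds$ is expressed as the product of the Radon--Nikodym derivative $dQ^{\alpha, a, b+u}_x / dQ^{\alpha, a, b}_x$ and a factor depending only on the endpoints $X_0 = x$ and $X_t$. Explicitly,
\begin{align*}
\exp\left\{ -\tr\left[ (u^2 + bu)\int_0^t (a^\top a)^{-1} X_s\, ds \right] \right\} dQ^{\alpha, a, b}_x = E(x, X_t)\, dQ^{\alpha, a, b+u}_x,
\end{align*}
where
\begin{align*}
E(x, y) := \exp\left\{ \tr\left[ -\tfrac{1}{2} (a^\top a)^{-1} u (y - x) + \tfrac{1}{2} \alpha u t \right] \right\}.
\end{align*}
Here I use the commutativity $au = ua$ and $ab = ba$ to ensure that $u$ and $b$ commute with $(a^\top a)^{-1}$, which in combination with the cyclic property of the trace matches the integrand appearing in (\ref{CMG drift}) with the form on the left-hand side above.

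Second, I would test the identity against an arbitrary bounded Borel function $h$ of $X_t$ and integrate. On the left, applying (\ref{Wishart bridge property}) with $f(\omega, y) = h(y)\exp\{-\tr[(u^2+bu)\int_0^t (a^\top a)^{-1} \omega_s\, ds]\}$ gives
\begin{align*}
\int h(y)\, Q^{\alpha, a, b}_{x, y, t}[F]\, q^b_t(x, y)\, dy,
\end{align*}
where $F$ denotes the exponential functional in the statement. On the right, integrating $h(y)E(x,y)$ against the marginal density of $X_t$ under $Q^{\alpha, a, b+u}_x$ gives $\int h(y)\, E(x, y)\, q^{b+u}_t(x, y)\, dy$. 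Since $h$ is arbitrary, the integrands agree $dy$-almost everywhere, producing the claimed identity after rewriting $E(x,y)$ in the compact form $\exp\{\tr[-\tfrac{1}{2} u((a^\top a)^{-1}(y - x) - \alpha t)]\}$ via the cyclic property of the trace.

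The main obstacle is a subtle domain extension: Lemma \ref{CMG formulae}(i) is stated for $u \in \tilde{\mathcal{S}}_n^-$, whereas the theorem allows any symmetric $u$ with $u + b \in \tilde{\mathcal{S}}_n^-$ and $au = ua$. To close this gap I would note that the target parameter $b+u$ is itself a valid Wishart drift, so both $Q^{\alpha,a,b}_x$ and $Q^{\alpha,a,b+u}_x$ are well-defined laws on $\mathcal{C}([0,\infty),\reals^{n\times n})$, and the exponential local martingale in (\ref{CMG drift}) is a genuine Radon--Nikodym density between them; the derivation depends only on this and the commutation relations. Once this is in place, the remainder of the argument is algebraic trace manipulation and closely parallels the $a = \id$, $b = 0$ case treated in \cite{donati2004some}.
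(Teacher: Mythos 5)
Your proposal is correct and follows essentially the same route as the paper's own proof: apply the Cameron--Martin--Girsanov drift formula (\ref{CMG drift}) to trade the occupation-time exponential for an endpoint functional under $Q^{\alpha,a,b+u}_x$, then disintegrate both sides via (\ref{Wishart bridge property}) against an arbitrary test function of $X_t$ and identify the integrands $dy$-almost everywhere. Your remark on extending the admissible set of $u$ beyond $\tilde{\mathcal{S}}_n^-$ to all symmetric $u$ with $u+b \in \tilde{\mathcal{S}}_n^-$ is a point the paper passes over silently, so flagging it is a small improvement rather than a divergence.
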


\begin{proof}
	For every measurable Borel measurable function $ f $, it follows from (\ref{Wishart bridge property}) and the Cameron-Martin-Girsanov formula (\ref{CMG drift}) that
	\begin{align*}
		&\int Q_{x,y}^b \left[ \exp \left\{ - \int_0^t \left(a^\top a \right)^{-1} \left(u^2 + b u\right) X_s ds \right\} \right] f(y) q^b_t (x,y) dy \\
		=& Q_{x}^b \left[\exp \left\{ - \int_0^t \left(a^\top a \right)^{-1} \left(u^2 + b u\right) X_s ds \right\} f\left(X_t\right)\right]\\
		=& Q^{b + u}_x \left[ \exp \left\{ \tr \left[ -\frac{1}{2}u \left(\left(a^\top a\right)^{-1} \left(X_t - x\right) - \alpha t \right) \right] \right\} f(X_t) \right]\\
		=& \int Q^{b + u}_{x,y} \left[ \exp \left\{ \tr \left[ -\frac{1}{2}u \left(\left(a^\top a\right)^{-1}\left(X_t - x\right) - \alpha t \right) \right] \right\} \right] f(y) q^{b+u}_t (x,y) d y \\
		=& \int \exp \left\{ \tr \left[ -\frac{1}{2} u \left(\left(a^\top a\right)^{-1}\left(y - x\right) - \alpha t \right) \right] \right\} f(y) q^{b+u}_t (x,y) d y.
	\end{align*}
	Therefore, we have
	\begin{align*}
	&Q_{x,y}^b \left[ \exp \left\{ - \tr \left(\int_0^t \left(u^2 + bu\right) \left(a^\top a \right)^{-1} X_s\right) ds \right\} \right] q^b_t (x,y) \\
	=& \exp \left\{ \tr \left[ -\frac{1}{2}u \left(\left(a^\top a \right)^{-1} \left(y - x\right) - \alpha t \right) \right] \right\} q^{b+u}_t (x,y),
	\end{align*}
	almost surely.
\end{proof}

Replacing $ u^2 + bu $ in Theorem \ref{integrated Wishart bridge main result} with $ u^2 $ and solve
\begin{align*}
u^2 = \left(a^\top a\right)^{-1}\left( \delta_u^2 + b \delta_u \right),
\end{align*}
for $ \delta_u $, we obtain the followings,

\begin{corollary} \label{Corollary to integrated Wishart bridge main result}
	Let  $ \alpha \geq n+1 $, $ a \in GL(n) $ and $ b \in \tilde{\mathcal{S}}_n^- $ be commutative. Then for $ t \geq 0 $,
	\begin{align}\label{Formula Corollary to Laplace transform integrated Wishart bridge}
		 \nonumber & Q_{x, y}^b \left[ \exp \left\{ - \tr \left(u^2 \int_0^t X_s ds\right) \right\} \right] \\  &=  \frac{q_t^{b+\delta_u}(x,y)}{q_t^b(x,y)}   \exp \left\{ \tr \left[ \frac{1}{2}(a^\top a)^{-3/2}\left(u^2 + bu\right)^{1/2} u \left(y - x - \alpha t \right) \right] \right\}, \quad u \in \mathcal{D},
	\end{align}
	where
	\begin{align*}
	\delta_u &= \frac{1}{2}\left(-b + \sqrt{ b^2 - 4 a^\top a u^2}\right),\\
	\mathcal{D} &= \left\{ u \in \mathcal{S}_n : \delta_u + b \in \tilde{\mathcal{S}}^-_n, au = ua  \right\},
	\end{align*}
	and $ X $ is the coordinate process, $ Q^b_{x,y} $ and $ q_t^b $ denotes the $ WIS^{n,\alpha,a,b}_t(x,y) $ law and the density of a $ WIS(n, \alpha, a, b, x) $ semigroup respectively.
\end{corollary}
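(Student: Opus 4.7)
The plan is to deduce the corollary directly from Theorem~\ref{integrated Wishart bridge main result} by choosing the drift parameter there so that the quadratic form inside the exponent matches the one in the present statement. Theorem~\ref{integrated Wishart bridge main result} controls expectations involving $\tr\bigl[(v^2+bv)(a^\top a)^{-1}\int_0^t X_s\,ds\bigr]$, whereas the corollary involves $\tr\bigl[u^2\int_0^t X_s\,ds\bigr]$. So the natural move is to solve the matrix equation $(a^\top a)^{-1}(\delta_u^2 + b\,\delta_u) = u^2$ for $\delta_u$ and substitute $v = \delta_u$ into Theorem~\ref{integrated Wishart bridge main result}.

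First I would solve the matrix quadratic $\delta_u^2 + b\,\delta_u - a^\top a\,u^2 = 0$. On the admissible set $\mathcal{D}$ the matrices $a$, $b$, $u$ pairwise commute, so the equation reduces to scalar quadratics on each joint eigenspace, and taking the PSD branch of the square root yields the closed-form expression for $\delta_u$ stated in the corollary. Next I would check that $\delta_u$ falls in the admissible set for Theorem~\ref{integrated Wishart bridge main result}: the condition $\delta_u + b \in \tilde{\mathcal{S}}_n^-$ is built into the definition of $\mathcal{D}$, and $a\,\delta_u = \delta_u\,a$ follows at once from the commutativity of $a$ with $b$ and $u$.

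With $v = \delta_u$ substituted into Theorem~\ref{integrated Wishart bridge main result}, the left-hand side collapses to the desired $Q_{x,y}^b\bigl[\exp\{-\tr(u^2\int_0^t X_s\,ds)\}\bigr]$, while the density ratio becomes $q_t^{b+\delta_u}(x,y)/q_t^b(x,y)$ as required. All that remains is to rewrite the deterministic prefactor $\exp\bigl\{\tr\bigl[-\tfrac12\,\delta_u\bigl((a^\top a)^{-1}(y-x)-\alpha t\bigr)\bigr]\bigr\}$ into the form appearing in~(\ref{Formula Corollary to Laplace transform integrated Wishart bridge}), which is a matter of using the defining relation $\delta_u(\delta_u+b) = a^\top a\,u^2$, together with the commutativity on $\mathcal{D}$, to factor $\delta_u$ into the product $(a^\top a)^{-3/2}(u^2+bu)^{1/2}u$ up to scalars.

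The main obstacle is not probabilistic but purely algebraic: one must verify carefully that the symmetric square roots in $\delta_u$ and in the prefactor are taken in the same PSD branch, so that $\delta_u$ is well-defined as a symmetric matrix and the rewriting of the prefactor is rigorous. All of the stochastic content of the corollary is already packaged inside Theorem~\ref{integrated Wishart bridge main result}; beyond this change of variable, no additional stochastic analysis is required.
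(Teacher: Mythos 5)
Your proposal matches the paper's own derivation: the paper obtains this corollary precisely by substituting $v=\delta_u$ into Theorem~\ref{integrated Wishart bridge main result}, where $\delta_u$ solves $u^2=(a^\top a)^{-1}(\delta_u^2+b\,\delta_u)$, and then rewriting the prefactor using commutativity. Your additional remarks on checking admissibility of $\delta_u$ and the choice of square-root branch only make explicit what the paper leaves implicit.
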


In the case of $ WIS(n, \alpha, I_n, 0, x) $, as considered in \cite{donati2004some}, Corollary \ref{Corollary to integrated Wishart bridge main result} allows us to find an explicit expression for the Laplace transform of an integrated Wishart bridge process. This extends formula (2.8) of \cite{donati2004some}, where the Laplace transform  of the trace of an integrated Wishart bridge process was considered. We summarise this result in the corollary below, which can also be considered as the matrix extension of formula (2.m) of \cite{Pitman1982}.

\begin{corollary}
	Let $ \alpha \geq n+1 $. For every $ u \in \tilde{\mathcal{S}}^-_n $, 
	\begin{align*}
		 Q_{x, y} \left[ \exp \left\{ - \tr \left(u^2 \int_0^t X_s ds\right) \right\} \right] =  \frac{q_t^{u}(x,y)}{q_t^0(x,y)} \exp \left( \tr \left[ - \frac{u}{2} \left( \alpha t + x - y \right) \right] \right),
	\end{align*}
where $ Q_{x,y} $ and $ q^b_t(x,y) $ denote the $ WIS^{n, \alpha, I, b}_t(x,y) $ law and the density of a $ WIS(n, \alpha, I, 0, x) $ semigroup respectively.
\end{corollary}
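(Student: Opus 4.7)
The plan is to obtain this identity as a direct specialization of Theorem \ref{integrated Wishart bridge main result} (equivalently, of Corollary \ref{Corollary to integrated Wishart bridge main result}) to the parameter values $a = I_n$ and $b = 0$: with those choices the left-hand side of the general bridge Laplace formula collapses exactly to the expectation in the Corollary, and everything else follows by inspection.

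First I would verify that, for every $u \in \tilde{\mathcal{S}}_n^-$, the hypotheses of Theorem \ref{integrated Wishart bridge main result} are satisfied at $a = I_n,\, b = 0$: the commutation constraint $au = ua$ is automatic, and the condition $u + b \in \tilde{\mathcal{S}}_n^-$ reduces to $u \in \tilde{\mathcal{S}}_n^-$. Also, the integrand $-\tr[(u^2 + bu)\int_0^t (a^\top a)^{-1} X_s\,ds]$ on the left-hand side of Theorem \ref{integrated Wishart bridge main result} collapses to $-\tr(u^2 \int_0^t X_s\,ds)$, which is exactly the exponent appearing in the Corollary.

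Second I would simplify the right-hand side of Theorem \ref{integrated Wishart bridge main result} at these parameter values. The ratio $q_t^{b+u}(x,y)/q_t^b(x,y)$ becomes $q_t^u(x,y)/q_t^0(x,y)$, matching the prefactor. The exponential factor $\exp\{\tr[-\tfrac{1}{2} u((a^\top a)^{-1}(y-x) - \alpha t)]\}$ becomes $\exp\{\tr[-\tfrac{u}{2}(y - x - \alpha t)]\}$, and one uses the identity $y - x - \alpha t = -(\alpha t + x - y)$ together with the linearity of the trace to put the expression into the form stated in the Corollary.

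Since every step is a direct substitution, no genuine analytic obstacle arises — the proof is essentially a two-line calculation. The only care needed is in the bookkeeping of signs; in particular, if one instead routes the argument through Corollary \ref{Corollary to integrated Wishart bridge main result} then the correct branch $\delta_u = u$ of the defining quadratic $\delta_u^2 = u^2$ must be chosen so that $\delta_u \in \tilde{\mathcal{S}}_n^-$, after which the simplifications proceed identically.
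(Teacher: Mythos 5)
Your overall route is exactly the paper's: the corollary is presented there as nothing more than the specialisation of Theorem \ref{integrated Wishart bridge main result} (equivalently Corollary \ref{Corollary to integrated Wishart bridge main result}) to $a = I_n$, $b = 0$, and your verification of the hypotheses and of the collapse of the left-hand side is fine. The problem lies in the step you dismiss as sign bookkeeping. Substituting $a = I_n$, $b = 0$ into the right-hand side of Theorem \ref{integrated Wishart bridge main result} gives the factor
\begin{align*}
\exp\left\{ \tr\left[ -\tfrac{1}{2}\, u \left( y - x - \alpha t \right) \right] \right\}
\;=\;
\exp\left\{ \tr\left[ +\tfrac{1}{2}\, u \left( \alpha t + x - y \right) \right] \right\},
\end{align*}
precisely because $y - x - \alpha t = -\left(\alpha t + x - y\right)$. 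This is the \emph{reciprocal} of the factor $\exp\left(\tr\left[-\tfrac{u}{2}\left(\alpha t + x - y\right)\right]\right)$ appearing in the statement; the two coincide only when $\tr\left[u\left(\alpha t + x - y\right)\right] = 0$. So the identity you invoke does not ``put the expression into the form stated in the Corollary'' --- it lands on the opposite sign, and your proof as written asserts an equality that is false for generic $u$, $x$, $y$.

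In practice the printed corollary almost certainly carries a sign typo: the scalar analogue, formula (2.m) of Pitman and Yor, reads $\exp\{-\tfrac{b}{2}(\delta t + x - y)\}$ with a \emph{nonnegative} drift parameter $b$, which corresponds to $b = -u$ here and hence to $\exp\{\tr[+\tfrac{u}{2}(\alpha t + x - y)]\}$ for $u \in \tilde{\mathcal{S}}_n^-$, in agreement with the direct substitution. A sound write-up must either prove the corrected formula or explicitly record the mismatch; silently claiming that the substitution reproduces the printed right-hand side is the gap. A secondary point: routing through Corollary \ref{Corollary to integrated Wishart bridge main result} is more delicate than you suggest, since at $b = 0$, $a = I_n$ the printed $\delta_u = \tfrac{1}{2}\sqrt{-4u^2}$ is not a real symmetric matrix; one really does have to return to Theorem \ref{integrated Wishart bridge main result} and choose the root $\delta_u = u \in \tilde{\mathcal{S}}_n^-$ of $\delta_u^2 = u^2$ by hand, as you note --- but that again produces the $+\tfrac{u}{2}$ sign.
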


\subsection{Generalised Hartman-Watson law}

The generalised Hartman-Watson law of a Wishart process $ X $ for $ a = \id $ and $ b = 0 $, namely the conditional distribution of 
\begin{align*}
	\tr\left( \int_0^t X^{-1}_s ds \right),
\end{align*}
given $ X_t $, was studied in \cite{donati2004some} through its Laplace transform. By using the Wishart bridge processes and absolute continuity property of Wishart laws, the Laplace transform of the generalised Hartman-Watson law given in \cite{donati2004some} can also be obtained for $ a \neq \id $.

As in Theorem \ref{integrated Wishart bridge main result}, by the definition of Wishart bridge processes and the Cameron-Martin-Girsanov formula (\ref{CMG index}), we have the followings,

\begin{theorem} \label{Hartman-Watson law main result}
	Let $ \alpha \geq n+1 $, $ \nu \in [\left(n+1 - \alpha\right)/2, \infty ) $ and $ t \geq 0 $, then
	\begin{align*}
	 Q^\alpha_{x,y}& \left[ \exp \left\{ - \left( \alpha - n - 1 + \nu \right) \frac{\nu}{2} \tr \left[ \left(a^\top a\right) \int_0^t X^{-1}_s d s  \right] \right\} \right] \\ & = \frac{q_t^{\alpha + 2 \nu}(x,y)}{q_t^\alpha(x,y)} \left(\frac{\det y}{\det x}\right)^{-\nu/2} \exp\{ \nu \tr(b) t \},
	\end{align*}
	where $ Q^\alpha_{x,y} $ and $ q^\alpha_t(x,y) $ denote the $ WIS^{n, \alpha, a, b}_t(x,y) $ law and the density of a $ WIS(n, \alpha, a, b, x) $ semigroup respectively.
\end{theorem}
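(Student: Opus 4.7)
The plan is to mirror the proof of Theorem~\ref{integrated Wishart bridge main result}, replacing the drift Cameron--Martin--Girsanov formula~(\ref{CMG drift}) by the index formula~(\ref{CMG index}). The defining property~(\ref{Wishart bridge property}) of the Wishart bridge lets me test the claimed identity against an arbitrary bounded Borel function $f$ of the endpoint, while~(\ref{CMG index}) converts the functional of $\int_0^t X_s^{-1}\,ds$ into a change of the index parameter from $\alpha$ to $\alpha+2\nu$.

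Concretely, I start with
\[
I_f := \int Q^\alpha_{x,y}\!\left[\exp\!\left\{-(\alpha-n-1+\nu)\frac{\nu}{2}\tr\!\left[(a^\top a)\int_0^t X_s^{-1}\,ds\right]\right\}\right] f(y)\, q^\alpha_t(x,y)\,dy.
\]
By~(\ref{Wishart bridge property}), $I_f$ equals the $Q_x^\alpha$-expectation of the same integrand times $f(X_t)$. Rearranging~(\ref{CMG index}) and noting $\tr[\nu b t]=\nu\tr(b)t$, the exponential factor together with $(\det X_t/\det x)^{-\nu/2}$ and $e^{\nu\tr(b)t}$ implements the change of measure from $Q_x^\alpha$ to $Q_x^{\alpha+2\nu}$, giving
\[
I_f = Q_x^{\alpha+2\nu}\!\left[\left(\frac{\det X_t}{\det x}\right)^{-\nu/2} e^{\nu\tr(b)t}\, f(X_t)\right].
\]
Disintegrating this via~(\ref{Wishart bridge property}) under $Q_x^{\alpha+2\nu}$, whose marginal density at time $t$ is $q_t^{\alpha+2\nu}(x,\cdot)$, and using that $X_t=y$ almost surely under $Q^{\alpha+2\nu}_{x,y}$, I obtain
\[
I_f = \int \left(\frac{\det y}{\det x}\right)^{-\nu/2} e^{\nu\tr(b)t}\, f(y)\, q_t^{\alpha+2\nu}(x,y)\,dy.
\]
Equating the two expressions for $I_f$ and invoking the arbitrariness of $f$ yields the stated identity for $q_t^\alpha(x,y)\,dy$-almost every $y$, which suffices given the continuity of both sides in $y$.

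The main substantive point is justifying the application of~(\ref{CMG index}). The hypothesis $\nu\geq(n+1-\alpha)/2$ guarantees $\alpha+2\nu\geq n+1$, so the shifted Wishart semigroup is well defined and admits the density $q_t^{\alpha+2\nu}(x,y)$. Combined with $\alpha\geq n+1$ and the implicit requirement $x\in\mathcal{S}_n^+$ (needed to make sense of $\det X_s^{-1}$), Theorem~3.4 of~\cite{MAYERHOFER20112072} ensures $\det X_s>0$ on $[0,t]$ almost surely, so $\int_0^t X_s^{-1}\,ds$ is finite and the exponential Radon--Nikodym derivative in~(\ref{CMG index}) is a bona fide martingale. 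Once this is verified, the remaining manipulations are routine Fubini exchanges, and unlike Theorem~\ref{integrated Wishart bridge main result} there is no auxiliary constraint analogous to $u\in\mathcal{D}$ to track.
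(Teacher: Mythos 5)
Your proof is correct and follows exactly the route the paper intends: its own proof of this theorem is the one-line remark that one argues ``as in the proof of Theorem~\ref{integrated Wishart bridge main result} by applying~(\ref{CMG index}) and~(\ref{Wishart bridge property})'', which is precisely the disintegration-against-a-test-function argument you carry out in detail. Your additional observations --- that $\nu\geq(n+1-\alpha)/2$ ensures $\alpha+2\nu\geq n+1$, and that $x\in\mathcal{S}_n^+$ is implicitly required for~(\ref{CMG index}) --- are accurate and fill in hypotheses the paper leaves tacit.
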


\begin{proof}
As in the proof of Theorem \ref{integrated Wishart bridge main result} by applying (\ref{CMG index}) and (\ref{Wishart bridge property}).
\end{proof}

We can therefore compute the Laplace transform of the generalised Hartman-Watson law, which extends Proposition 2.4 of \cite{donati2004some} to a wider class of Wishart processes.

\begin{corollary}
	Let $ \alpha \geq n + 1 $ and $ t \geq 0 $, then for every $ u \in \reals $,
	\begin{align*}
	Q^\alpha_{x,y}  \left[ \exp \left\{ - \frac{u^2}{2} \tr \left[ \left(a^\top a\right) \int_0^t X^{-1}_s d s  \right] \right\} \right] = \frac{q_t^{\alpha + 2 \nu_u}(x,y)}{q_t^\alpha(x,y)} \left(\frac{\det y}{\det x}\right)^{-\frac{\nu_u}{2}} \exp\{ \nu_u \tr(b) t \},
	\end{align*}
	where
	\begin{align*}
	\nu_u = \sqrt{u^2 + (\alpha - n - 1)^2} - \alpha + n + 1,
	\end{align*} 
	$ Q^\alpha_{x,y} $ and $ q_t^\alpha $ denote the $ WIS^{n,\alpha,a,b}_t(x,y) $ law and the density of a $ WIS(n, \alpha, a, b, x) $ semigroup respectively.
\end{corollary}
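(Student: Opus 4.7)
The plan is to obtain the corollary as a direct specialisation of Theorem 2.1 by choosing the parameter $\nu$ so that the exponents match. For a given $u \in \reals$, I would seek $\nu_u$ such that $(\alpha - n - 1 + \nu_u)\nu_u = u^2$, i.e., the appropriate root of the scalar quadratic
\begin{equation*}
\nu^2 + (\alpha - n - 1)\nu - u^2 = 0.
\end{equation*}
With this choice the coefficient $(\alpha - n - 1 + \nu_u)(\nu_u/2)$ multiplying $\tr[(a^\top a)\int_0^t X_s^{-1} ds]$ in Theorem 2.1 coincides with the coefficient $u^2/2$ in the corollary.

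Under the standing hypothesis $\alpha \geq n+1$, the constant $c := \alpha - n - 1$ is non-negative, and the quadratic above admits a unique non-negative root, which I would designate as $\nu_u$. Before invoking Theorem 2.1 I would check the admissibility condition $\nu_u \in [(n+1-\alpha)/2, \infty)$; this is immediate since $(n+1-\alpha)/2 = -c/2 \leq 0$ and $\nu_u$ was chosen to be the non-negative root. Substituting $\nu = \nu_u$ into the conclusion of Theorem 2.1 then yields the right-hand side of the corollary, with the density ratio $q_t^{\alpha + 2\nu_u}(x,y)/q_t^\alpha(x,y)$, the determinant factor $(\det y/\det x)^{-\nu_u/2}$, and the drift exponential $\exp\{\nu_u \tr(b) t\}$ all transferred directly from Theorem 2.1.

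The argument is essentially a substitution, so no serious obstacle arises; the only delicate point is the identification of $\nu_u$ as the correct branch of the quadratic, after which all of the heavy lifting has already been done by the bridge construction in Section 2 and the Cameron–Martin–Girsanov formula (\ref{CMG index}) via Theorem \ref{Hartman-Watson law main result}.
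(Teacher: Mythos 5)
Your route is exactly the one the paper intends: the corollary is meant to follow from Theorem \ref{Hartman-Watson law main result} by choosing $\nu$ so that $(\alpha-n-1+\nu)\,\nu/2=u^2/2$, and your check of the admissibility condition $\nu_u\ge (n+1-\alpha)/2$ via non-negativity of the chosen root is the right point to verify. However, there is a concrete gap at the final step: you never reconcile ``the unique non-negative root of $\nu^2+(\alpha-n-1)\nu-u^2=0$'' with the explicit formula for $\nu_u$ displayed in the statement, and in fact the two disagree. Writing $c=\alpha-n-1$, the non-negative root is
\begin{equation*}
\nu=\frac{-c+\sqrt{c^2+4u^2}}{2}=\sqrt{u^2+\left(\tfrac{c}{2}\right)^2}-\tfrac{c}{2},
\end{equation*}
whereas the statement defines $\nu_u=\sqrt{u^2+c^2}-c$; these coincide only when $c=0$ (i.e.\ $\alpha=n+1$) or $u=0$. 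Indeed $(c+\nu_u)\nu_u=u^2+c^2-c\sqrt{u^2+c^2}\neq u^2$ in general, so substituting the statement's $\nu_u$ into Theorem \ref{Hartman-Watson law main result} does not produce the coefficient $u^2/2$.

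The scalar case $n=1$, $a=1$, $b=0$ (squared Bessel, where the Hartman--Watson relation changes the index from $(\alpha-2)/2$ to $\sqrt{((\alpha-2)/2)^2+u^2}$) confirms that the root you computed is the correct one and that the displayed $\nu_u$ appears to be off by a factor of two inside the square root. So your argument, once completed, proves the corollary with $\nu_u$ replaced by $\tfrac{1}{2}\bigl(\sqrt{(\alpha-n-1)^2+4u^2}-(\alpha-n-1)\bigr)$; as a proof of the statement as printed it is incomplete, and carrying out the final identification --- which you omit --- exposes the mismatch. You should either record the corrected root explicitly or flag the discrepancy; note that the same expression for $\nu_\lambda$ propagates into Theorem \ref{main theorem} and Theorem \ref{main result}.
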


\subsection{Proof of Theorem \ref{main result}}

Combining the arguments made in the proofs of Theorem \ref{integrated Wishart bridge main result} and Theorem \ref{Hartman-Watson law main result}, we have the following,

\begin{theorem} \label{main theorem}
	Let $ a \in GL(n) $, $ b \in \tilde{\mathcal{S}}_n^- $ be commutative, $ \alpha \geq n + 1 $. Then for every $ u \in \tilde{\mathcal{S}}_n $ such that $ u + b \in \tilde{\mathcal{S}}_n^- $, $ u a = a u $ and $ \lambda \in \reals $,
	\begin{align*}
	&Q_{x, y, t}^{\alpha, a, b} \left[ \exp \left\{ - \tr \left[ \left(u^2+bu\right) \int_0^t \left(a^\top a\right)^{-1} X_s ds \right] - \frac{\lambda^2}{2} \tr \left[ \int_0^t \left(a^\top a\right) X^{-1}_s d s \right] \right\} \right] \\
	&= \frac{q^{\alpha + 2 \nu_\lambda, a, b+u}_t(x,y)}{q^{\alpha, a, b}_t(x,y)} \left(\frac{\det y}{\det x}\right)^{-\frac{\nu_\lambda}{2}}  \exp \left\{ \tr \left[ \nu_\lambda b t -\frac{1}{2}  u \left(\left(a^\top a\right)^{-1}\left(y - x\right) - \alpha t \right) \right] \right\},
	\end{align*}
	where
	\begin{align*}
		\nu_\lambda = \sqrt{\lambda^2 + (\alpha - n - 1)^2} - \alpha + n + 1,
	\end{align*}
	$ Q^b_{x,y} $ and $ q_t^b $ denotes the $ WIS^{n,\alpha,a,b}_t(x,y) $ law and the density of a $ WIS(n, \alpha, a, b, x) $ semigroup respectively.
\end{theorem}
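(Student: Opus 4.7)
The plan is to combine the two Cameron-Martin-Girsanov changes of measure from Lemma \ref{CMG formulae} inside a single application of the disintegration identity (\ref{Wishart bridge property}), running the arguments from the proofs of Theorems \ref{integrated Wishart bridge main result} and \ref{Hartman-Watson law main result} simultaneously instead of one at a time.

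First I would take an arbitrary Borel measurable function $f:\reals^{n\times n}\to\reals$, multiply the proposed identity by $f(y)\,q^{\alpha,a,b}_t(x,y)$ and integrate in $y$. By (\ref{Wishart bridge property}), the left-hand side becomes
\[
Q^{\alpha,a,b}_x\!\left[\exp\{A+B\}\,f(X_t)\right],
\]
where $A=-\tr[(u^2+bu)\int_0^t(a^\top a)^{-1}X_s\,ds]$ and $B=-\tfrac{\lambda^2}{2}\tr[\int_0^t(a^\top a)X^{-1}_s\,ds]$. This reduces the problem to an unconditional expectation under the Wishart law, after which no further reference to the bridge is needed until the last step.

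Next I would apply (\ref{CMG drift}) with parameter $u$ to pass from $Q^{\alpha,a,b}_x$ to $Q^{\alpha,a,b+u}_x$: the factor $\exp\{A\}$ is precisely what is needed to cancel the $\int_0^t (a^\top a)^{-1}(u^2+bu)X_s\,ds$ term inside the Girsanov density, leaving behind only an exponential that depends on $X_t$, $x$ and $t$. Then I would apply (\ref{CMG index}) with parameter $\nu_\lambda$ to pass from $Q^{\alpha,a,b+u}_x$ to $Q^{\alpha+2\nu_\lambda,a,b+u}_x$; here $\exp\{B\}$ absorbs the $\int_0^t (a^\top a)X^{-1}_s\,ds$ term in the Girsanov density, which is the reason for the choice $(\alpha-n-1+\nu_\lambda)\nu_\lambda=\lambda^2$ that defines $\nu_\lambda$. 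What remains is an expectation of the form $Q^{\alpha+2\nu_\lambda,a,b+u}_x[G(X_t)\,f(X_t)]$ for an explicit $G$ involving $(\det X_t/\det x)^{-\nu_\lambda/2}$ and an exponential whose exponent is a trace linear in $X_t$ and $t$. One final use of (\ref{Wishart bridge property}), now under $Q^{\alpha+2\nu_\lambda,a,b+u}_x$, rewrites this as $\int G(y)\,f(y)\,q^{\alpha+2\nu_\lambda,a,b+u}_t(x,y)\,dy$. Since $f$ is arbitrary, the integrands must agree for $\mu_t$-almost every $y$, and dividing by $q^{\alpha,a,b}_t(x,y)$ produces the advertised density ratio together with the exponential correction.

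The main obstacle is not conceptual but bookkeeping: one has to check that the product of the two Girsanov densities exactly cancels both the $\int X_s\,ds$ and the $\int X^{-1}_s\,ds$ integrands, leaving a remainder that depends only on $X_t$ (so that the second disintegration goes through). The commutativity hypotheses $ab=ba$, $au=ua$ and $u+b\in\tilde{\mathcal{S}}_n^-$ are used throughout to manipulate the matrix-valued exponents as if they were scalars, in particular to justify grouping $u^2+bu$ with $(a^\top a)^{-1}$ and to ensure that the drift $b+u$ still lies in $\tilde{\mathcal{S}}_n^-$ so that (\ref{CMG drift}) is applicable.
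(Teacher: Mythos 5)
Your proposal follows essentially the paper's own (one-line) proof: the paper simply states that Theorem \ref{main theorem} is obtained by combining the arguments of Theorems \ref{integrated Wishart bridge main result} and \ref{Hartman-Watson law main result}, which is precisely your sequential application of the two Cameron--Martin--Girsanov densities (\ref{CMG drift}) and (\ref{CMG index}) inside the disintegration identity (\ref{Wishart bridge property}), so the approach is correct and identical. The one piece of bookkeeping worth checking (you flag it yourself) is that applying (\ref{CMG index}) at drift $b+u$ yields a factor $\exp\{\tr[\nu_\lambda (b+u) t]\}$ rather than the $\exp\{\tr[\nu_\lambda b t]\}$ in the stated formula, and that the matching condition you cite, $\left(\alpha-n-1+\nu_\lambda\right)\nu_\lambda=\lambda^2$, should be reconciled with the paper's explicit definition of $\nu_\lambda$.
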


Therefore, Theorem \ref{main theorem} can be reformulated to give the joint Laplace transform of the pair 
\begin{align*}
	\left( \int_0^t X_s ds, \int_0^t \tr \left(a^{-1}a X^{-1}_s \right) ds \right),
\end{align*}
under the Wishart bridge law.

\begin{corollary} \label{Corollary to main theorem}
	Let $ a \in GL(n) $, $ b \in \tilde{\mathcal{S}}_n^- $ be commutative, $ \alpha \geq n + 1 $. Then,
	\begin{align*}
	Q_{x, y, t}^{\alpha, a, b} & \left[ \exp \left\{ - \tr \left[ u^2 \int_0^t X_s ds \right] - \frac{\lambda^2}{2} \tr \left[ \int_0^t \left(a^\top a\right) X^{-1}_s d s \right] \right\} \right] \\
	= & \frac{q^{\alpha + 2 \nu_\lambda, a, b + \delta_u}_t(x,y)}{q^{\alpha, a, b}_t(x,y)} \left(\frac{\det y}{\det x}\right)^{-\nu_\lambda/2} \\ & \quad \qquad \exp \left\{ \tr \left[ \nu_\lambda b t +  \left(\frac{1}{2}\left(a^\top a\right)^{-3/2}\left(u^2 + bu\right)^{1/2} \left(y - x\right) - \alpha t \right) \right] \right\}, 
	\end{align*}
	where
	\begin{align*}
	    u & \in \mathcal{D}, \quad \lambda \in \reals,\\
	    \delta_u &= \frac{1}{2}\left(-b + \sqrt{ b^2 - 4 a^\top a u^2}\right),\\
		\mathcal{D} &= \left\{ u \in \mathcal{S}_n : \delta_u + b \in \tilde{\mathcal{S}}_n^-, au = ua \right\}, \\
		\nu_\lambda &= \sqrt{\lambda^2 + (\alpha - n - 1)^2} - \alpha + n + 1,
	\end{align*}
	$ Q^b_{x,y} $ and $ q_t^b $ denote the $ WIS^{n,\alpha,a,b}_t(x,y) $ law and the density of a $ WIS(n, \alpha, a, b, x) $ semi-group respectively.
\end{corollary}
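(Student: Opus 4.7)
The plan is to deduce Corollary \ref{Corollary to main theorem} from Theorem \ref{main theorem} by a change of variables $u \mapsto \delta_u$, in exact parallel to the way Corollary \ref{Corollary to integrated Wishart bridge main result} was obtained from Theorem \ref{integrated Wishart bridge main result}. Theorem \ref{main theorem} delivers a joint Laplace transform whose drift-perturbation exponent is $-\tr[(u^2 + bu)\int_0^t (a^\top a)^{-1} X_s\, ds]$, and to arrive at the cleaner form $-\tr[u^2 \int_0^t X_s\, ds]$ stated in the corollary one solves the matrix quadratic $\delta^2 + b \delta = a^\top a\, u^2$ for $\delta = \delta_u$ and inserts it in place of the drift perturbation appearing in the theorem.

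The algebraic backbone is the identity $(a^\top a)^{-1}(\delta_u^2 + b\delta_u) = u^2$, which is precisely the defining quadratic for $\delta_u$. Since $a$, $b$ and $u$ are pairwise commutative under the hypotheses, the matrices $a$, $b$, $u$ and hence $\delta_u$ are simultaneously diagonalisable, the square root in $\delta_u = \tfrac{1}{2}(-b + \sqrt{b^2 - 4 a^\top a u^2})$ is well-defined via the functional calculus on the relevant spectral region, and the scalar quadratic identity lifts directly to the matrix setting. Substituting $u \mapsto \delta_u$ in Theorem \ref{main theorem} converts the first exponent into $-\tr[u^2 \int_0^t X_s\, ds]$ (the $\lambda$-term being untouched), the density-ratio numerator into $q^{\alpha + 2\nu_\lambda, a, b+\delta_u}_t(x,y)$, and the linear prefactor $-\tfrac{1}{2}u((a^\top a)^{-1}(y - x) - \alpha t)$ into $-\tfrac{1}{2}\delta_u((a^\top a)^{-1}(y - x) - \alpha t)$; this last expression is then rewritten using the defining equation of $\delta_u$ to match the $(u^2 + bu)^{1/2}$ form advertised in the corollary.

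The main obstacle---really bookkeeping rather than substance---is verifying that $\delta_u$ satisfies the admissibility conditions required to invoke Theorem \ref{main theorem}: symmetry, commutativity with $a$, and the cone condition $\delta_u + b \in \tilde{\mathcal{S}}_n^-$. Symmetry is clear from the functional-calculus construction. Commutativity $a\delta_u = \delta_u a$ is automatic because $\delta_u$ is a polynomial in $b$ and $a^\top a u^2$, both of which commute with $a$. The cone condition is precisely the hypothesis encoded in the definition of $\mathcal{D}$ in the corollary, so no new verification is required beyond what the simultaneous diagonalisation already provides. Once these checks are in place, the remaining steps consist purely of algebraic tidying and tracking constants.
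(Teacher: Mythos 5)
Your proposal matches the paper's own (implicit) argument: the corollary is obtained from Theorem \ref{main theorem} exactly by the substitution $u \mapsto \delta_u$, where $\delta_u$ solves $u^2 = (a^\top a)^{-1}(\delta_u^2 + b\,\delta_u)$, in direct parallel to how Corollary \ref{Corollary to integrated Wishart bridge main result} is derived from Theorem \ref{integrated Wishart bridge main result}. Your additional checks (simultaneous diagonalisability, symmetry and commutativity of $\delta_u$, and the cone condition being built into $\mathcal{D}$) are consistent with, and slightly more explicit than, what the paper records.
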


Given a filtered probability space $\filteredspace$ and a Wishart process $X$ defined on it. Then Theorem \ref{main result} follows from Corollary \ref{Corollary to main theorem} by identifying $ \E\left( {}\cdot{} | X_t = y\right) $ to $ \E\left( {}\cdot{} | \sigma (X_t) \right)(\omega_y) $ where $ \omega_y \in \left\{ \omega \in \Omega : X_t(\omega) = y \right\} $.

\newpage

\bibliographystyle{JasonBib}

\bibliography{library}

\end{document}